\title{Quotients of CI-groups are CI-groups}
\author{Edward Dobson \\
Department of Mathematics and Statistics \\
Mississippi State University \\
PO Drawer MA Mississippi State, MS 39762 \\
dobson@math.msstate.edu\\
\\
Joy Morris\\
Department of Mathematics and Computer Science\\
University of Lethbridge\\
Lethbridge, AB T1K 3M4 Canada\\
joy.morris@uleth.ca}
\newtheorem{thrm}{Theorem}
\newtheorem{lem}[thrm]{Lemma}
\theoremstyle{definition}
\newtheorem{defin}[thrm]{Definition}
 \newcounter{case}
 \renewcommand{\thecase}{\arabic{case}}
\newcounter{subcase}
 \renewcommand{\thesubcase}{\alph{subcase}}
\def\tl{\triangleleft}
\def\fix{{\rm fix}}
\def\Aut{{\rm Aut}}
\def\Cay{{\rm Cay}}
\def\sym{{\mathcal S}}
\newenvironment{proof}{\noindent {\sc Proof}.}
                {\phantom{a} \hfill \framebox[2.2mm]{ } \bigskip}
\begin{document}

\pagestyle{plain}

\baselineskip = 1.3\normalbaselineskip

\maketitle

\begin{abstract}
We show that a quotient group of a CI-group with respect to (di)graphs is a CI-group with respect to (di)graphs.
\end{abstract}

In \cite{BabaiF1978, BabaiF1979}, Babai and Frankl provided strong constraints on which finite groups could be CI-groups with respect to graphs.  As a tool in this program, they proved \cite[Lemma 3.5]{BabaiF1978} that a quotient group $G/N$ of a CI-group $G$ with respect to graphs is a CI-group with respect to graphs provided that $N$ is characteristic in $G$.  They were not able to prove that a quotient group of a CI-group with respect to graphs is a CI-group with respect to graphs in the general case, and so introduced the notion of a {\it weak CI-group with respect to graphs} in order to treat quotient groups of CI-groups.  In some sense, the program that Babai and Frankl started was completed by Li \cite{Li1999} when he showed that all CI-groups are solvable. (Babai and Frankl mention in \cite{BabaiF1979} a sequel to their first paper that addressed showing all CI-groups with respect to graphs are solvable. This sequel never appeared.)  We will show that a quotient group of a CI-group with respect to (di)graphs is a CI-group with respect to (di)graphs.  This will allow for a simplification of the proofs of Babai and Frankl in \cite{BabaiF1978, BabaiF1979} (for example the notion of a weak CI-group with respect to graphs will no longer be needed), and consequently, as Li's proof in \cite{Li1999} was based on the earlier work of Babai and Frankl, a simplification of the proof that a CI-group with respect to graphs is solvable.  We begin with some basic definitions.

\begin{defin}
Let $G$ be a group and $S\subset G$. Define a {\bf Cayley digraph of
$G$}, denoted $\Cay(G,S)$, to be the digraph with $V(\Cay(G,S)) =
G$ and $E(\Cay(G,S)) = \{(g,gs):g\in G, s\in S\}$.  We call $S$ the {\bf connection set of $\Cay(G,S)$}.  If $S = S^{-1}$, then $\Cay(G,S)$ is a graph.
\end{defin}

Typically, definitions of Cayley (di)graphs assume $1_G \not\in S$ to avoid loops, but this assumption is rarely material to proofs, and will not be made here.

It is straightforward to show that $g_L:G\to G$ by $g_L(x) = gx$ is always an automorphism of $\Cay(G,S)$, and so $G_L = \{g_L:g\in G\}$ is a subgroup of $\Aut(\Cay(G,S))$, the automorphism group of $\Cay(G,S)$.  $G_L$ is the {\bf left regular representation of $G$}.

\begin{defin}
We say that a group $G$ is a {\bf CI-group with respect to (di)graphs} if  given $\Cay(G,S)$ and $\Cay(G,S')$, $S,S'\subset G$, then $\Cay(G,S)$ and $\Cay(G,S')$ are isomorphic if and only if $\alpha(S) = S'$ for some $\alpha\in \Aut(G)$.
\end{defin}

It is also straightforward to verify that $\alpha(\Cay(G,S))=\Cay(G, \alpha(S))$ is a Cayley (di)graph of $G$ for every $S\subset G$ and $\alpha\in\Aut(G)$.  Thus if one is testing whether or not two Cayley (di)graphs of a group $G$ are isomorphic, one must always check whether or not there is a group automorphism of $G$ that acts as an isomorphism.  A CI-group with respect to (di)graphs is then a group where the group automorphisms of $G$ are the only maps which need to be checked to determine isomorphism.

We now state some of the definitions from permutation group theory that will be required.

\begin{defin}
Let $G$ be a transitive group acting on a set $X$.  A subset $B\subseteq X$ is a {\bf block} of $G$ if whenever $g\in G$, then $g(B)\cap B \in \{\emptyset,B\}$.  If $B = \{x\}$ for some $x\in X$ or $B = X$, then $B$ is a {\bf trivial block}. Any other block is nontrivial, and if $G$ admits nontrivial blocks then $G$ is {\bf imprimitive}.  If $G$ is not imprimitive, we say that $G$ is {\bf primitive}.  Note that if $B$ is a block of $G$, then $g(B)$ is also a block of $B$ for every $g\in G$, and is called a {\bf conjugate block of $B$}.  The set of all blocks conjugate to $B$, denoted ${\cal B}$, is a partition of $X$, and ${\cal B}$ is called a {\bf $G$-invariant partition of $X$}.
\end{defin}

\begin{defin}
Let ${\cal B}$ be a $G$-invariant partition.  Define $\fix_G({\cal B}) = \{g\in G:g(B) = B{\rm\ for\ all\ }B\in{\cal B}\}$.  That is, $\fix_G({\cal B})$ is the group of permutations in $G$ that simultaneously fixes each block of ${\cal B}$ set-wise.  If ${\cal C}$ is also a $G$-invariant partition and for every $C\in{\cal C}$ we have that $C\subset B$ for some $B \in \cal B$, we write ${\cal C}\preceq{\cal B}$.  So ${\cal C}$ is a refinement of ${\cal B}$.
\end{defin}

Wreath products of both groups and graphs will be crucial.

\begin{defin}
Let $G$ be a permutation group acting on $X$ and $H$ a permutation group acting on $Y$.  Define the {\bf wreath product of $G$ and $H$}, denoted $G\wr H$, to be the set of all permutations $f$ of $X\times Y$ for which there exists $g \in G$, and for every $x \in X$ there exists $h_x \in H$, such that $f((x,y))=(g(x),h_x(y))$.
\end{defin}

We remark that many authors reverse the order of $G$ and $H$ in $G\wr H$, and/or refer to the wreath product of graphs (see Definition \ref{graphwreath} below) as the lexicographic product.

The following result is certainly known by many readers.  It and its proof are included here for completeness.

\begin{lem}\label{wreathblocksunique}
Let $G$ and $H$ be transitive groups and ${\cal B}$ the $(G\wr H)$-invariant partition formed by the orbits of $1_G\wr H$.  If ${\cal C}$ is a $(G\wr H)$-invariant partition, then either ${\cal B}\preceq{\cal C}$ or ${\cal C}\preceq{\cal B}$.  Consequently, ${\cal B}$ is the only $(G\wr H)$-invariant partition with blocks whose length is the degree of $H$.
\end{lem}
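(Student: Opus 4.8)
The plan is to pin down $\mathcal{B}$ concretely and then run a short case analysis on how an arbitrary $(G\wr H)$-invariant partition $\mathcal{C}$ meets the ``columns'' $\{x\}\times Y$ of $X\times Y$.

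First I would observe that an element of $1_G\wr H$ has the form $f((x,y))=(x,h_x(y))$, so it fixes first coordinates, and as $h_{x_0}$ ranges over the transitive group $H$ the image of $(x_0,y_0)$ ranges over all of $\{x_0\}\times Y$. Hence the orbits of $1_G\wr H$ are precisely the sets $\{x\}\times Y$, so $\mathcal{B}=\{\{x\}\times Y:x\in X\}$ and every block of $\mathcal{B}$ has length $|Y|$, the degree of $H$.

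Now let $\mathcal{C}$ be a $(G\wr H)$-invariant partition. Either every block of $\mathcal{C}$ is contained in a single column $\{x\}\times Y$ --- which is exactly $\mathcal{C}\preceq\mathcal{B}$ --- or some block $C\in\mathcal{C}$ contains points $(x_1,y_1)$ and $(x_2,y_2)$ with $x_1\ne x_2$. In the latter case I would first show $\{x_1\}\times Y\subseteq C$: for $h\in H$ let $f\in 1_G\wr H$ be given by $h_{x_1}=h$ and $h_x=1_H$ for $x\ne x_1$; then $f$ fixes $(x_2,y_2)\in C$, so the block $f(C)\in\mathcal{C}$ meets $C$ and hence equals it, giving $(x_1,h(y_1))=f((x_1,y_1))\in C$, and letting $h$ run over $H$ yields $\{x_1\}\times Y\subseteq C$ by transitivity of $H$. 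Next, given an arbitrary $x\in X$, choose $g\in G$ with $g(x_1)=x$ (possible since $G$ is transitive) and let $\tilde g\in G\wr H$ act by $\tilde g((x',y'))=(g(x'),y')$; then $\tilde g(C)\in\mathcal{C}$ and $\{x\}\times Y=\tilde g(\{x_1\}\times Y)\subseteq\tilde g(C)$. Thus every block of $\mathcal{B}$ lies in a block of $\mathcal{C}$, i.e.\ $\mathcal{B}\preceq\mathcal{C}$, which establishes the dichotomy. The ``consequently'' clause then follows immediately: all blocks of $\mathcal{B}$ have length the degree of $H$, so if $\mathcal{C}$ also has this property then whichever of $\mathcal{B}\preceq\mathcal{C}$, $\mathcal{C}\preceq\mathcal{B}$ holds is a refinement between two (finite) partitions with equal block sizes, forcing $\mathcal{B}=\mathcal{C}$.

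I do not anticipate a genuine obstacle here; the whole argument is elementary. The only points that need a little care are the use of the ``single-fibre'' elements of $1_G\wr H$ that move one coordinate block while fixing a chosen point in another, and the observation that it is invariance of $\mathcal{C}$ under all of $G\wr H$ --- not merely under $1_G\wr H$ --- that lets one promote the inclusion $\{x_1\}\times Y\subseteq C$ to every column. Implicitly everything is finite, as in the CI-group setting, which is what legitimizes ``$\preceq$ together with equal block sizes implies equality''.
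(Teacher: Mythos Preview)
Your proof is correct and rests on the same idea as the paper's: exploiting the ``single-fibre'' elements of the base group $1_G\wr H$ that move points within one column $\{x\}\times Y$ while fixing everything elsewhere. The only cosmetic difference is that the paper runs the dichotomy in the opposite direction---it assumes $\mathcal{B}\not\preceq\mathcal{C}$ and, using the pointwise stabilizer $K$ of the complement of a block $B\in\mathcal{B}$ (your single-fibre elements, packaged as a group) together with the block property, concludes that any $C$ meeting $B$ must lie inside it, giving $\mathcal{C}\preceq\mathcal{B}$.
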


\begin{proof}
Let ${\cal C}$ be a $(G\wr H)$-invariant partition, and $B\in{\cal B}$.  Let $K$ be the point-wise stabilizer of every point {\it not} in $B$.  Then $K$ is transitive on $B$.  Now, either ${\cal B}\preceq{\cal C}$ or not.  If so, we are finished.  If not, then let $C\in{\cal C}$ such that $C\cap B\not = \emptyset$.  Then there exists at least one  
element of $B$ not in $C$, and so there exists $k\in K$ such that $k(C)\not = C$.  Then $k(C)\cap C = \emptyset$ so that $k$ fixes no point of $C$.  But $k$ fixes every point not in $B$, and so $C\subseteq B$ and ${\cal C}\preceq{\cal B}$.
\end{proof}

\begin{defin}\label{graphwreath}
Let $\Gamma_1$ and $\Gamma_2$ be digraphs.  The {\bf wreath product of $\Gamma_1$ and $\Gamma_2$}, denoted $\Gamma_1\wr\Gamma_2$ is the digraph with vertex set $V(\Gamma_1)\times V(\Gamma_2)$ and edge set

$$\{(u,v)(u,v'):u\in V(\Gamma_1){\rm\ and\ }vv'\in E(\Gamma_2)\}\cup\{(u,v)(u',v'):uu'\in E(\Gamma_1){\rm\ and\ }v,v'\in V(\Gamma_2)\}.$$
\end{defin}

The following result \cite[Theorem 5.7]{DobsonM2009} giving the automorphism group of vertex-transitive wreath product (di)graphs will be useful.  In the statement, for a (di)graph $\Gamma$, $\bar{\Gamma}$ denotes the complement of $\Gamma$.

\begin{thrm}\label{graphwreath}
For any finite vertex-transitive (di)graph $\Gamma \cong \Gamma_1 \wr \Gamma_2$, if
$\Aut(\Gamma) \neq \Aut(\Gamma_1) \wr \Aut(\Gamma_2)$ then there are some natural
numbers $r>1$ and $s>1$ and vertex-transitive (di)graphs $\Gamma_1'$ and $\Gamma_2'$ for which either
\begin{enumerate}
\item $\Gamma_1 \cong \Gamma_1' \wr K_r$, $\Gamma_2 \cong K_s \wr \Gamma_2'$ or
\item $\Gamma_1\cong\Gamma_1'\wr\bar{K}_r$ and $\Gamma_2\cong \bar{K}_s \wr \Gamma_2'$,
\end{enumerate}
and $\Aut(\Gamma)=\Aut(\Gamma_1')\wr (\sym_{rs} \wr \Aut(\Gamma_2'))$.
\end{thrm}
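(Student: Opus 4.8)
The plan is to study $G=\Aut(\Gamma)$, where $\Gamma=\Gamma_1\wr\Gamma_2$ has vertex set $V(\Gamma_1)\times V(\Gamma_2)$, through the partition ${\cal B}=\{L_u:u\in V(\Gamma_1)\}$ into the \emph{$\Gamma_2$-layers} $L_u=\{u\}\times V(\Gamma_2)$; note that ${\cal B}$ is the set of orbits of $1_{\Gamma_1}\wr\Aut(\Gamma_2)$, which is a subgroup of $G$. The first step is to reduce to the case in which ${\cal B}$ is \emph{not} $G$-invariant. Indeed $\Aut(\Gamma_1)\wr\Aut(\Gamma_2)\le G$ always, by a routine check against the two types of edges in the definition of the wreath product of digraphs, and this subgroup preserves ${\cal B}$. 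Conversely, if ${\cal B}$ is $G$-invariant I would argue that equality holds: the kernel $N=\fix_G({\cal B})$ fixes each $L_u$ set-wise, and since the subdigraph of $\Gamma$ induced on $L_u$ is a copy of $\Gamma_2$, restriction to the layers identifies $N$ with $1_{\Gamma_1}\wr\Aut(\Gamma_2)$; and $G/N$ acts faithfully on ${\cal B}$ preserving the quotient digraph, which is $\Gamma_1$ up to loops that do not affect the automorphism group, so $G/N\le\Aut(\Gamma_1)$. Comparing orders with $\Aut(\Gamma_1)\wr\Aut(\Gamma_2)\le G$ then forces $G=\Aut(\Gamma_1)\wr\Aut(\Gamma_2)$. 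So under the hypothesis of the theorem, ${\cal B}$ is not $G$-invariant.

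The heart of the argument is to extract the two possible decompositions from a witness to this: some $g\in G$ and layers $L_u$, $L_v$ with $g(L_u)\cap L_v$ a proper, nonempty subset of $L_v$, where (after replacing $g$ if necessary) we may take $u\ne v$. A vertex $w\in g(L_u)\cap L_v$ has two competing descriptions of its adjacencies to vertices outside $L_v$: as a vertex of $L_v$, these are completely determined by the row of $v$ in $\Gamma_1$, since all edges of $\Gamma$ between distinct layers are governed by $\Gamma_1$; as a vertex of $g(L_u)$, they must be the $g$-image of the adjacencies of a vertex of $L_u$, which are governed by the row of $u$. Reconciling these across all layers forces $u$ and $v$ to be \emph{twins} in $\Gamma_1$ (equal out-neighbourhoods and equal in-neighbourhoods off $\{u,v\}$), with the pair $\{u,v\}$ either joined by arcs in both directions or by no arc at all; the former yields a complete-digraph module $K_r$ in $\Gamma_1$, the latter an empty module $\bar K_r$. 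Performing the analogous bookkeeping on the $\Gamma_2$-coordinates of $g(L_u)\cap L_v$ shows that these coordinates form a module of $\Gamma_2$, and tracking which edges and non-edges of $\Gamma_2$ are crossed links this to the first dichotomy: the $K_r$ case forces $\Gamma_2\cong K_s\wr\Gamma_2'$ and the $\bar K_r$ case forces $\Gamma_2\cong\bar K_s\wr\Gamma_2'$, with $r,s\ge 2$ because the overlap is proper and nonempty. Using the vertex-transitivity of $\Gamma$ to propagate this local picture to every layer, we obtain globally either (1) $\Gamma_1\cong\Gamma_1'\wr K_r$ and $\Gamma_2\cong K_s\wr\Gamma_2'$, or (2) $\Gamma_1\cong\Gamma_1'\wr\bar K_r$ and $\Gamma_2\cong\bar K_s\wr\Gamma_2'$, with $\Gamma_1'$ and $\Gamma_2'$ inheriting vertex-transitivity from $\Gamma$.

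It remains to compute $\Aut(\Gamma)$. By associativity of $\wr$, case (1) gives $\Gamma\cong\Gamma_1'\wr K_{rs}\wr\Gamma_2'$ and case (2) gives $\Gamma\cong\Gamma_1'\wr\bar K_{rs}\wr\Gamma_2'$; I would take $\Gamma_1'$, $\Gamma_2'$, $r$, $s$ chosen so that neither $\Gamma_1'$ nor $\Gamma_2'$ carries a further complete- or empty-digraph wreath factor on the side adjacent to the middle term (equivalently, $r$ and $s$ are taken as large as possible). Under this choice the $\Gamma_1'$-layer partition of $\Gamma$ is $G$-invariant, so the first step applies to the decomposition $\Gamma\cong\Gamma_1'\wr(K_{rs}\wr\Gamma_2')$ and gives $G=\Aut(\Gamma_1')\wr\Aut(K_{rs}\wr\Gamma_2')$; likewise the $K_{rs}$-layer partition of $K_{rs}\wr\Gamma_2'$ is $\Aut(K_{rs}\wr\Gamma_2')$-invariant, giving $\Aut(K_{rs}\wr\Gamma_2')=\Aut(K_{rs})\wr\Aut(\Gamma_2')=\sym_{rs}\wr\Aut(\Gamma_2')$, and symmetrically in case (2). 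Combining, $G=\Aut(\Gamma_1')\wr(\sym_{rs}\wr\Aut(\Gamma_2'))$. The invariance assertions used here, together with the final uniqueness statement about $G$-invariant partitions of a prescribed block length, are precisely what Lemma \ref{wreathblocksunique} supplies when applied to the appropriate wreath sections of $G$.

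I expect the main obstacle to be the second paragraph: passing from the bare existence of the proper overlap $g(L_u)\cap L_v$ to the \emph{clean} $K_r$-versus-$\bar K_r$ dichotomy together with the matching splitting of $\Gamma_2$, all while handling out- and in-adjacencies separately in the digraph setting and checking $r,s\ge 2$. A secondary difficulty is making the maximality choice for $\Gamma_1'$ and $\Gamma_2'$ in the last paragraph sufficiently precise that the outer layer partitions genuinely are $G$-invariant and the middle factor is a ``pure'' $K_{rs}$ or $\bar K_{rs}$ with automorphism group $\sym_{rs}$.
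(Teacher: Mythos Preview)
This theorem is not proved in the present paper: it is quoted verbatim as \cite[Theorem~5.7]{DobsonM2009} and used as a black box in the proof of the main result. So there is no ``paper's own proof'' to compare against here.

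That said, your outline is along the right lines for how one actually proves such a statement, and indeed the structure you describe (reduce to the case where the $\Gamma_2$-layer partition is not $\Aut(\Gamma)$-invariant; analyse an overlap $g(L_u)\cap L_v$ to force a twin/module structure in both factors; pass to a maximal decomposition to read off the automorphism group) is essentially the architecture of the argument in \cite{DobsonM2009}. Your first paragraph is a correct proof of the Sabidussi-type direction. The genuine work, as you yourself flag, is the second paragraph: turning ``$g(L_u)\cap L_v$ is a proper nonempty subset'' into a clean $K_r$/$\bar K_r$ dichotomy with a matching $K_s$/$\bar K_s$ splitting of $\Gamma_2$ is several pages of careful case analysis in \cite{DobsonM2009}, not a one-line twin argument; in particular, one must track both out- and in-neighbourhoods in the digraph case, show that the twin classes in $\Gamma_1$ are uniform in type (all ``true'' twins or all ``false'' twins) and of constant size, and correlate this with the module structure in $\Gamma_2$. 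Your third paragraph also hides work: the assertion that with $r,s$ maximal the $\Gamma_1'$-layer partition is $G$-invariant is exactly the content of the theorem applied one level up, so one needs an inductive or iterative set-up rather than a single appeal to Lemma~\ref{wreathblocksunique}.

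In short: nothing you wrote is wrong as a plan, but the paper under review simply cites this result rather than proving it, and a complete proof is substantially longer than your sketch suggests.
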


\begin{thrm}
Let $G$ be a CI-group with respect to (di)graphs and $H\tl G$.  Then $G/H$ is a CI-group with respect to (di)graphs.
\end{thrm}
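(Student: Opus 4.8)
The plan is to lift Cayley (di)graphs of $G/H$ to Cayley (di)graphs of $G$ by ``blowing up'' each vertex with a suitable (di)graph on $|H|$ vertices, apply the CI-property of $G$, and push the resulting automorphism of $G$ back down to $G/H$. The essential difficulty is that the automorphism of $G$ furnished by the CI-property need not preserve $H$; Theorem~\ref{graphwreath} together with Lemma~\ref{wreathblocksunique} is precisely what makes it possible to arrange that it does.

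So suppose $\Cay(G/H,S)\cong\Cay(G/H,S')$. Since an isomorphism of (di)graphs preserves the number of loops and Cayley (di)graphs are vertex-transitive, $1_{G/H}\in S$ if and only if $1_{G/H}\in S'$; deleting $1_{G/H}$ from both $S$ and $S'$ changes neither the isomorphism type of the two graphs nor whether some $\bar\alpha\in\Aut(G/H)$ carries $S$ to $S'$, so we may assume $1_{G/H}\notin S\cup S'$. Let $\pi\colon G\to G/H$ be the projection, $\bar S=\pi^{-1}(S)$, $\bar S'=\pi^{-1}(S')$ (unions of cosets of $H$, disjoint from $H$), and $\Delta=\Cay(G/H,S)$, $\Delta'=\Cay(G/H,S')$, so $\Delta\cong\Delta'$. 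Identifying the vertex set $G$ with $(G/H)\times H$ via a transversal, one checks directly that $\Cay(G,\bar S)=\Delta\wr\bar K_{|H|}$ and $\Cay(G,\bar S\cup(H\setminus\{1_G\}))=\Delta\wr K_{|H|}$, with the fibres being the cosets of $H$; here $K_{|H|}$ is the complete loopless (di)graph on $|H|$ vertices, and the same holds with $S'$ in place of $S$. The crucial observation is a dichotomy for the vertex-transitive (di)graph $\Delta'$: reading ``twins'' as a pair of vertices with equal in- and out-neighbourhoods, either $\Delta'$ has no \emph{false twins} (the two vertices not joined by an arc) or $\Delta'$ has no \emph{true twins} (the two vertices joined by arcs both ways). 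Indeed, if $\Delta'$ has a pair of true twins then, by vertex-transitivity, its true-twin classes form blocks of a common size $t\ge 2$ and $\Delta'\cong\Gamma_1'\wr K_t$, and one computes directly that a wreath product of this form has no false twins.

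Now assume first that $\Delta'$, and hence $\Delta$, has no false twins. By Theorem~\ref{graphwreath}, $\Aut(\Delta\wr\bar K_{|H|})=\Aut(\Delta)\wr\sym_{|H|}$ and $\Aut(\Delta'\wr\bar K_{|H|})=\Aut(\Delta')\wr\sym_{|H|}$: case~(1) of that theorem cannot occur, as $\bar K_{|H|}$ is not of the form $K_s\wr\Gamma_2'$ with $s>1$; and case~(2) would force $\Delta$, respectively $\Delta'$, to have false twins. Hence, by Lemma~\ref{wreathblocksunique}, the partition $\mathcal B$ of $G$ into cosets of $H$ --- the fibre partition of each of these wreath decompositions --- is the \emph{unique} partition invariant under $\Aut(\Cay(G,\bar S))$, respectively $\Aut(\Cay(G,\bar S'))$, with blocks of size $|H|$. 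Since $\Delta\cong\Delta'$, also $\Cay(G,\bar S)\cong\Cay(G,\bar S')$, so the CI-property of $G$ yields $\beta\in\Aut(G)$ with $\beta(\bar S)=\bar S'$. Such a $\beta$ normalizes $G_L$ and conjugates $\Aut(\Cay(G,\bar S))$ onto $\Aut(\Cay(G,\bar S'))$; being a bijection it preserves block sizes, so it carries the unique $\Aut(\Cay(G,\bar S))$-invariant partition with blocks of size $|H|$ to the corresponding one for $\Cay(G,\bar S')$. Thus $\beta(\mathcal B)=\mathcal B$, and since $\beta(1_G)=1_G$ this forces $\beta(H)=H$. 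Therefore $\beta$ descends to $\bar\beta\in\Aut(G/H)$, and from $\pi\beta=\bar\beta\pi$ we get $\bar\beta(S)=\pi(\beta(\bar S))=\pi(\bar S')=S'$. If instead $\Delta'$ has no true twins, the same argument with $\Delta\wr K_{|H|}$ and $\Delta'\wr K_{|H|}$ in place of $\Delta\wr\bar K_{|H|}$ and $\Delta'\wr\bar K_{|H|}$ (here case~(1) of Theorem~\ref{graphwreath} is excluded because $\Delta$, $\Delta'$ have no true twins, and case~(2) because $K_{|H|}$ is complete) produces $\gamma\in\Aut(G)$ carrying $\bar S\cup(H\setminus\{1_G\})$ to $\bar S'\cup(H\setminus\{1_G\})$ with $\gamma(H)=H$; then $\gamma(H\setminus\{1_G\})=H\setminus\{1_G\}$ gives $\gamma(\bar S)=\bar S'$, and $\gamma$ descends as before.

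The step I expect to carry the weight is the dichotomy: for a vertex-transitive (di)graph, at least one of the two blow-ups has the ``expected'' automorphism group, so that Lemma~\ref{wreathblocksunique} renders the coset partition $\mathcal B$ canonical and the CI-automorphism is forced to respect $H$. Proving that a vertex-transitive (di)graph cannot have both true and false twins, and checking precisely which alternative of Theorem~\ref{graphwreath} is excluded for each blow-up --- with extra care in the directed setting, where ``twin'' must mean equality of both in- and out-neighbourhoods --- is the part needing attention; what remains is routine manipulation of wreath products.
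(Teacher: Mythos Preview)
Your proposal is correct and follows essentially the same route as the paper: lift via a wreath-product blow-up (choosing $K_{|H|}$ or $\bar K_{|H|}$ according to the structure of $\Delta$), invoke Theorem~\ref{graphwreath} to identify the automorphism group as $\Aut(\Delta)\wr\sym_{|H|}$, use Lemma~\ref{wreathblocksunique} to force the CI-automorphism of $G$ to preserve the coset partition and hence $H$, then descend to $G/H$. Your twin dichotomy is exactly the paper's case split (``$\Delta\cong\Gamma_1\wr K_r$ for some $r\ge2$'' is the same as ``$\Delta$ has true twins''), and your explicit verification that a graph with true twins has no false twins is precisely what justifies the paper's ``As before'' when applying Theorem~\ref{graphwreath} in its second case.
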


\begin{proof}
Let $\ell = \vert H\vert$, and $\Cay(G/H,S_1)$ and $\Cay(G/H,S_2)$ be isomorphic.  If $\Cay(G/H,S_1)\not = \Gamma_1\wr K_\ell$ for some (di)graph $\Gamma_1$ and $\ell\ge 2$, then $\Cay(G/H,S_2)\not = \Gamma_2\wr K_\ell$ for any (di)graph $\Gamma_2$ and $\ell\ge 2$.  In this case, define $T_1 = \{gh:gH\in S_1,h\in H\}\cup(H - \{1_G\})$ and $T_2 = \{gh:gH\in S_2, h\in H\}\cup(H - \{1_G\})$.  Then $\Cay(G,T_1) = \Cay(G/H,S_1)\wr K_\ell$ and $\Cay(G,T_2) = \Cay(G/H,S_2)\wr K_\ell$ are isomorphic Cayley (di)graphs of $G$.  Additionally, by Theorem \ref{graphwreath}, we have that $\Aut(\Cay(G,T_1)) = \Aut(\Cay(G/H,S_1))\wr \sym_\ell$ and $\Aut(\Cay(G,T_2)) = \Aut(\Cay(G/H,S_2))\wr \sym_\ell$.  On the other hand, if $\Cay(G/H,S_1) = \Gamma_1\wr K_\ell$ for some $\Gamma_1$ and $\ell\ge 2$, then $\Cay(G/H,S_2) = \Gamma_2\wr K_\ell$ for some $\Gamma_2$.  In this case, define $T_1 = \{gh:gH\in S_1,h\in H\}$ and $T_2 = \{gh:gH\in S_2,h\in H\}$.  Then $\Cay(G,T_1) = \Cay(G/H,S_1)\wr \bar{K}_\ell$ and $\Cay(G,T_2) = \Cay(G/H,S_2)\wr\bar{K}_\ell$ are isomorphic Cayley digraphs of $G$.  As before, by Theorem \ref{graphwreath}, we have that $\Aut(G,T_1) = \Aut(\Cay(G/H,S_1))\wr \sym_\ell$ and $\Aut(\Cay(G,T_2)) = \Aut(\Cay(G/H,S_2))\wr \sym_\ell$.  In either case, $\Cay(G,T_1)$ and $\Cay(G,T_2)$ are isomorphic Cayley digraphs of $G$ such that $\Aut(\Cay(G,T_1)) = \Aut(\Cay(G/H,S_1))\wr\sym_\ell$ and $\Aut(\Cay(G,T_2)) = \Aut(\Cay(G/H,S_2))\wr\sym_\ell$.

%
%
%

As $G$ is a CI-group with respect to (di)graphs, there exists $\alpha\in\Aut(G)$ such that $\alpha(\Cay(G,T_1)) = \Cay(G,\alpha(T_1))=\Cay(G,T_2)$.
Since both $\Cay(G,T_1)$ and $\Cay(G,T_2)$ have the form $\Gamma_1'\wr \Gamma_2'$ where $\Gamma_2'$ has order $\ell$, Lemma \ref{wreathblocksunique} tells us that there is a unique $\Aut(\Cay(G,T_1))$-invariant partition with blocks of length $\ell$ in $\Cay(G,T_1)$, and a unique $\Aut(\Cay(G,T_2))$-invariant partition with blocks of length $\ell$ in $\Cay(G,T_2)$, and furthermore that in each case, these block systems are formed by the orbits of $1_{\Aut(\Cay(G/H,S_i))} \wr S_{\ell}$. By inspecting the connection sets of $\Cay(G,T_1)$ and $\Cay(G,T_2)$, it is clear that in both graphs these orbits are the cosets of $H$ in $G$.  Since $\alpha$ is an isomorphism from $\Cay(G,T_1)$ to $\Cay(G,T_2)$, it must take the unique invariant partition with blocks of length $\ell$ in $\Cay(G,T_1)$, to the unique invariant partition with blocks of length $\ell$ in $\Cay(G,T_2)$, and hence take any coset of $H$ to a coset of $H$.  Since $\alpha \in \Aut(G)$ it takes subgroups of $G$ to subgroups of $G$, so in particular, $\alpha(H)=H$.

Now $\alpha$ induces an automorphism $\bar{\alpha}$ of $G/H$ defined by $\bar{\alpha}(gH)=\alpha(g)H$.  Since $\alpha(H)=H$, this is well-defined.  We claim that $\bar{\alpha}(\Cay(G/H,S_1)) = \Cay(G/H,\bar{\alpha}(S_1))=\Cay(G/H,S_2)$, and so $G/H$ is a CI-group with respect to digraphs.
To see this, suppose that $gH \in S_1$.  Then $\bar{\alpha}(gH)=\alpha(g)H$, and by the definition of $T_1$, $gh \in T_1$ for every $h \in H$.  Since $\alpha(T_1)=T_2$, this means that $\alpha(gh)=\alpha(g)\alpha(h) \in T_2$ for every $h \in H$, and since $\alpha(H)= H$, this means $\alpha(g)h \in T_2$ for every $h \in H$.  By definition of $T_2$, this means that $\bar{\alpha}(gH)=\alpha(g)H \in S_2$.  Since $gH$ was an arbitrary element of $S_1$, this shows that $\bar{\alpha}(S_1)=S_2$, as claimed.
\end{proof}

\providecommand{\bysame}{\leavevmode\hbox to3em{\hrulefill}\thinspace}
\providecommand{\MR}{\relax\ifhmode\unskip\space\fi MR }
\providecommand{\MRhref}[2]{%
  \href{http://www.ams.org/mathscinet-getitem?mr=#1}{#2}
}
\providecommand{\href}[2]{#2}

\end{document}